\theoremstyle{definition}
\newtheorem{theorem}{Theorem}
\newtheorem{prop}[theorem]{Proposition}
\newtheorem{lemma}[theorem]{Lemma}
\theoremstyle{definition}
\newcommand{\R}{\mathbb{R}}
\newcommand{\N}{\mathbb{N}}
\newcommand{\C}{\mathbb{C}}
\newcommand{\K}{\mathbb{K}}
\newcommand{\e}{\varepsilon}
\newcommand{\eps}{\varepsilon}
\def\A{\mathcal{A}_{w^*u}(B_{X^*})}
\DeclareMathOperator{\re}{Re}
\DeclareMathOperator{\ext}{Ext}
\renewcommand{\leq}{\leqslant}
\renewcommand{\geq}{\geqslant}
\renewcommand{\le}{\leqslant}
\renewcommand{\geq}{\geqslant}
\title{A non-linear Bishop-Phelps-Bollob\'as type theorem}
\author[S.~Dantas]{Sheldon Dantas}
\address{Departamento de An\'{a}lisis Matem\'{a}tico, Universidad de Valencia, Doctor Moliner 50, 46100 Burjasot (Valencia), Spain} \email{sheldon.dantas@uv.es}
\author[D.~Garc\'ia]{Domingo Garc\'ia}
\address{Departamento de An\'{a}lisis Matem\'{a}tico, Universidad de Valencia, Doctor Moliner 50, 46100 Burjasot	(Valencia), Spain} \email{domingo.garcia@uv.es}
\author[S. K. Kim]{Sun Kwang Kim}
\address{Department of Mathematics, Kyonggi University, 443-760 (Suwon), Republic of Korea}
\email{sunkwang@kgu.ac.kr}
\author[U. Y. Kim]{Un Young Kim}
\address{Department of Mathematics,	POSTECH, 790-784, (Pohang), Republic of Korea}
\email{kuy1219@postech.ac.kr}
\author[H. J. Lee]{Han Ju Lee}
\address{Department of Mathematics Education, Dongguk University, 100-715 (Seoul), Republic of Korea}
\email{hanjulee@dongguk.edu}
\author[M.~Maestre]{Manuel Maestre}
\address{Departamento de An\'{a}lisis Matem\'{a}tico,
Universidad de Valencia, Doctor Moliner 50, 46100 Burjasot	(Valencia), Spain}
\email{manuel.maestre@uv.es}
\thanks{First author was supported by MINECO and FEDER project MTM2014-57838-C2-2-P, and Ci\^encias Sem Fronteiras - Doutorado Pleno grant 0050/13-0. Second and sixth authors was supported by MINECO and FEDER project MTM2014-57838-C2-2-P, and Prometeo II/2013/013. Third author was partially supported by Basic Science Research Program through the National Research Foundation of Korea(NRF) funded by the Ministry of Education, Science and Technology (2014R1A1A2056084). The fifth author was supported by Basic Science Research Program through the National Research Foundation of Korea(NRF) funded by the Ministry of Education, Science and Technology (NRF-2016R1D1A1B03934771).}
\subjclass[2010]{Primary: 46B04;  Secondary: 46G20}
\date{}
\keywords{peak point, strong peak points, holomorphic functions, Bishop-Phelps-Bollob\'as theorem}
\dedicatory{}
\begin{document}
	
\begin{abstract} The main aim of this paper is to prove a Bishop-Phelps-Bollob\'as type theorem on the unital uniform algebra $\mathcal{A}_{w^*u}(B_{X^*})$ consisting of all $w^*$-uniformly continuous functions on the closed unit ball $B_{X^*}$ which are holomorphic on the interior of $B_{X^*}$. We show that this result holds for $\mathcal{A}_{w^*u}(B_{X^*})$ if $X^*$ is uniformly convex or $X^*$ is the uniformly complex convex dual space of an order continuous absolute normed space. The vector-valued case is also studied.
\end{abstract}

 \maketitle

In 1961, Bishop and Phelps proved that the set of norm attaining functionals is dense in the dual space \cite{BP}. They questioned whether the same result holds for bounded linear operators and the answer was given two years later by Lindenstrauss \cite{L} who gave a counterexample proving that in general the answer is false. However, he also presented conditions on a Banach space to get positive results. On the other hand, Bollob\'as proved a stronger version for linear functionals which is nowdays known as the Bishop-Phelps-Bollob\'as theorem \cite{Bol}. This says that functionals and points which they almost attain their norms can be simultaneously approximated by norm attaining functionals and points which they
attain their norms. We highlight this theorem because it is the main motivation for the present work. If $X$ is a Banach space, $\e \in (0, 2)$ and $(x, x^*) \in B_X \times B_{X^*}$ satisfy the following inequality
\begin{equation*}
\re x^*(x) > 1 - \frac{\e^2}{2},
\end{equation*}
then there is $(y, y^*) \in S_X \times S_{X^*}$ such that
\begin{equation*}
|y^*(y)| = 1, \ \ \|y - x\| < \e \ \ \mbox{and} \ \ \|y^* - x^*\| < \e
\end{equation*}
(this version can be found in \cite[Corollary 2.4]{CKMM}).

Since 2008, with the seminal paper by Acosta, Aron, Garc\'ia and Maestre \cite{AAGM},
a lot of attention had been paid in the attempt to get Bishop-Phelps-Bollob\'as type theorems for bounded linear operators, homogeneous polynomials and multilinear mappings by putting conditions on the Banach spaces as Lindenstrauss did. Our aim here is to get a Bishop-Phelps-Bollob\'as type theorem for holomorphic functions. In \cite{AAGM2} the authors showed that if $X$ is  a complex Banach space with Radon-Nikod\'ym property and if $\mathcal{A}_u(B_X)$ stands for the space of all
uniformly continuous functions on the closed unit ball $B_X$ which are holomorphic on the interior endowed with the supremum norm, then the set of all norm attaining elements is dense in $\mathcal{A}_u(B_X)$. This result was sharpened to the denseness of the set of all strong peak functions on $\mathcal{A}_u(B_X)$ \cite[Theorem 4.4]{CLS}.

\vspace{0.3cm}

Before we give our results, we present the necessary background. Throughout the paper we consider Banach spaces over the complex field. We will use the notation $X^*$, $S_X$ and $B_X$ for the dual space, the unit sphere and the closed unit ball of $X$, respectively. We denote by $\A$ the unital uniform algebra of all $w^*$-uniformly continuous functions from $B_{X^*}$ into $\C$ which are holomorphic on the interior of $B_{X^*}$ endowed with the supremum norm
\begin{equation*}
\|f\|_{\infty} = \sup \left\{ |f(x^*)|: x^*\in B_{X^*} \right\}.
\end{equation*}
It is known that $\mathcal{A}_{w*}(B_{X^*})$ coincides with   $\mathcal{A}_{w*u}(B_{X^*})$ \cite{ACG}. An element $x_0^* \in S_{X^*}$ is said to be a {\it strong peak point} for $\A$ if there is $f \in \A$ such that $\|f\|_{\infty} = |f(x_0^*)| = 1$ and for every $w^*$-neighborhood $W$ of $x_0^*$, we have
\begin{equation*}
\sup_{x^* \in B_{X^*} \setminus W} |f(x^*)| < 1.
\end{equation*}
In this case, $f$ is said to be a {\it strong peak function at} $x_0^*$. We denote by $\Gamma$ the set of all strong peak points for $\A$.

Let $K$ be a nonempty compact Hausdorff space and let $A$ be a uniform algebra. Given $t \in K$, the function $\delta_t: A \longrightarrow \C$ defined by $\delta_t(f) = f(t)$ is called the evaluation functional at $t$. A set $S \subset K$ is said to be a boundary for the uniform algebra $A$ if for every $f \in A$, there is $x \in S$ such that $|f(x)| = \|f\|_{\infty}$.   If $S = \{x^* \in A^*: \ \|x^*\| = x^*(\textbf{1}) = 1 \}$ and $\ext_{\R}(S)$ stands for the set of all real extreme points of $S$, then $\Gamma_0(A) = \{t \in K: \delta_t \in \ext_{\R}(S)\}$ is a boundary for $A$ which is called the {\it Choquet boundary} of $A$.

 Let $A$ be a unital uniform algebra on $K$.  It is well-known that the Choquet boundary for $A$ consists exactly of the strong peak points for $A$ \cite[Proposition 4.3.4]{Dales}. Recall that an element $x$ of the unit ball of a Banach space $X$ is said to be a complex extreme point if $\sup_{0 \leq \theta \leq 2 \pi} \|x + e^{i \theta} y\| > 1$ for all nonzero $y \in X$. We denote by $\ext_{\C} (B_{X})$ the set of all complex extreme points of $B_X$. Note that $\Gamma$ is the Choquet boundary for $\A$ and $\Gamma$ is contained in $\ext_{\C}(B_{X^*})$. In particular, it is observed that $\Gamma = \ext_{\C}(B_{X^*})$ for finite dimensional Banach spaces $X$ \cite[Proposition 1.1]{CHL}.

To get a version of the Bishop-Phelps-Bollob\'as theorem on the space $\A$, we need to consider stronger peak functions. A point $x_0^* \in S_{X^*}$ is said to be a {\it strong peak point for $\A$ with respect to the norm} if there is $f \in \A$ such that $\|f\|_{\infty} = |f(x_0^*)| = 1$ and for all $\delta > 0$,
\begin{equation*}
\sup_{y^* \in B_{X^*} \setminus B(x_0^*, \delta)} |f(y^*)| < 1.
\end{equation*}
Equivalently, it happens when $\|f\|_{\infty} =|f(x_0^*)|=1$ and $\{x_n^*\} \stackrel{\| \cdot\|}{\longrightarrow} x_0^*$ whenever $\{x^*_n\} \subset B_{X^*}$ satisfies $\lim_{n\to \infty} |f(x_n^*)|=1$. The function $f$ is called a {\it strong peak function at $x_0^*$ with respect to the norm}. We denote by $\Gamma_s$ the set of all strong peak points for $\A$ with respect to the norm.
We note that for every $w^*$-neighborhood $W$ of $x_0^*$, there exists $\delta > 0$ such that $B(x_0^*, \delta) \subset W$. This implies that $\Gamma_s \subset \Gamma$.


\vspace{0.3cm}

In what follows, we present a non-linear version of the Bishop-Phelps-Bollob\'as theorem for the unital uniform algebra $\A$ by assuming that the set of all strong peak points with respect to the norm is norm dense in the unit sphere of the dual space. After that, we give some consequences like a Lindenstrauss-Bollob\'as type theorem as well as a vector-valued extension of the main result. Finally, we present some examples when $\Gamma_s$ is norm dense in $S_{X^*}$.

\vspace{0.3cm}

We start with the Bishop-Phelps-Bollob\'as theorem for $\A$.

\begin{theorem} \label{BPBpforAw} Let $X$ be a complex Banach space and suppose that $\Gamma_s$ is norm dense in $S_{X^*}$. Then, given $\e \in (0, 1)$, there exists $\eta(\e) > 0$ such that whenever $f \in \A$ with $\|f\|_{\infty} = 1$ and $x_0^* \in S_{X^*}$ satisfy
\begin{equation*}	
|f(x_0^*)| > 1 - \eta(\e),
\end{equation*}
there are $g \in \A$ with $\|g\|_{\infty} = 1$ and $x_1^*\in S_{X^*}$ such that
\begin{equation*}
|g(x_1^*)| = 1, \ \ \  \|g - f\|_{\infty} < \e \ \ \ \mbox{and} \ \ \  \|x_1^* - x_0^* \| < \e.
\end{equation*}
\end{theorem}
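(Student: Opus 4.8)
The plan is to produce $g$ as a small multiplicative perturbation of $f$ by a high power of a strong peak function with respect to the norm, one that peaks near $x_0^*$ at a point where $|f|$ is already close to $1$. Fix $\e\in(0,1)$ and put $\eta(\e):=\e/20$ (any sufficiently small positive multiple of $\e$ works). Suppose $f\in\A$ with $\|f\|_\infty=1$ and $x_0^*\in S_{X^*}$ satisfy $|f(x_0^*)|>1-\eta(\e)$. The first move is to use the density hypothesis: the set $\{y^*\in S_{X^*}:\|y^*-x_0^*\|<\e/4\ \text{and}\ |f(y^*)|>1-2\eta\}$ is relatively norm-open in $S_{X^*}$ and contains $x_0^*$, hence it meets $\Gamma_s$; pick $x_1^*$ in this intersection. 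Let $h\in\A$ be a strong peak function at $x_1^*$ with respect to the norm, and, after multiplying by a unimodular scalar, assume $h(x_1^*)=1=\|h\|_\infty$.

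Next, set $\lambda:=8\eta$ and define $g:=f(1+\lambda h^n)\in\A$ (here $1$ denotes the unit of $\A$), with $n$ chosen below. Because $x_1^*$ is a strong peak point \emph{with respect to the norm}, the quantity $c:=\sup\{|h(y^*)|:y^*\in B_{X^*},\ \|y^*-x_1^*\|\ge\e/4\}$ is strictly less than $1$; this is the single place where the norm-density hypothesis is genuinely used, $c$ being a supremum over the complement of a \emph{norm} ball. Choose $n$ so large that $c^n<1/2$. Then for $\|y^*-x_1^*\|\ge\e/4$ one has $|g(y^*)|\le 1+\lambda c^n<1+\lambda/2$, whereas $|g(x_1^*)|=|f(x_1^*)|(1+\lambda)>(1-2\eta)(1+8\eta)=1+6\eta-16\eta^2>1+\lambda/2$, the last inequality because $\eta<1/8$. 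The function $g$ is $w^*$-continuous on the $w^*$-compact ball $B_{X^*}$, so $|g|$ attains its maximum; the two estimates rule out a maximizer at norm-distance $\ge\e/4$ from $x_1^*$, and the maximum modulus principle rules out a maximizer interior to $B_{X^*}$ unless $g$ is constant. (If $g$ is constant, then $f$ lies within $\e$ of a unimodular constant and the conclusion is immediate with $x_1^*$ replaced by $x_0^*$; so assume $g$ non-constant.) Hence $\|g\|_\infty=|g(z^*)|$ for some $z^*\in S_{X^*}$ with $\|z^*-x_1^*\|<\e/4$, so $\|z^*-x_0^*\|<\e/2$.

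Finally, I renormalize and collect the estimates. Since $\|f\|_\infty=\|h\|_\infty=1$ and $h(x_1^*)=1$, we get $1<|f(x_1^*)|(1+\lambda)\le\|g\|_\infty\le 1+\lambda$, so $g\ne 0$ and $0\le\|g\|_\infty-1\le\lambda=8\eta$. Put $\widetilde g:=g/\|g\|_\infty$. Then $\|\widetilde g\|_\infty=1$, $|\widetilde g(z^*)|=1$, and
\[
\|\widetilde g-f\|_\infty\le\bigl|\,1-\|g\|_\infty\,\bigr|+\|g-f\|_\infty\le 8\eta+\lambda\|f h^n\|_\infty\le 8\eta+8\eta=16\eta<\e .
\]
Relabelling $g:=\widetilde g$ and $x_1^*:=z^*$ finishes the proof. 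The one point requiring care is the order in which the parameters are fixed: $\eta$ depends on $\e$ alone and is chosen at the outset, while $\lambda$ is a fixed multiple of $\eta$, the radius $\e/4$ is fixed, and $n$ — which depends through $h$ on $f$ and $x_0^*$ — is chosen last; the displayed computations show these later choices do not feed back into $\eta(\e)$. I note also that there is no need for $g$ to attain its norm precisely at $x_1^*$: the potential obstruction that $|f|$ exceeds $|f(x_1^*)|$ at points near $x_1^*$ is harmless, since a maximizer anywhere in the $(\e/4)$-ball around $x_1^*$ already yields the required conclusion.
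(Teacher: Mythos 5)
Your proof is correct, but it takes a genuinely different route from the paper's. The paper's argument rests on a Urysohn-type lemma for uniform algebras (\cite[Lemma 2.7]{CGK}): inside a suitable $w^*$-open set $U_2$ meeting the Choquet boundary it produces $\phi\in\A$ and $x_1^*\in U_2$ with $\phi(x_1^*)=\|\phi\|_\infty=1$, $|\phi|<\eta$ off $U_2$, and the key self-improving estimate $|\phi(x^*)|+(1-\eta)|1-\phi(x^*)|\le 1$; the perturbation is then the \emph{additive} interpolation $g=\frac{f(x_0^*)}{|f(x_0^*)|}\,\phi+(1-\eta)(1-\phi)f$, which by construction has $\|g\|_\infty=|g(x_1^*)|=1$ at the explicitly produced point $x_1^*$ (the strong peak function $h$ enters only through the auxiliary set $\{|h|>r\}$ that forces $x_1^*$ close to $z_0^*$ in norm). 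You instead use the \emph{multiplicative} perturbation $f(1+\lambda h^n)$ built directly from the strong peak function, and you locate the norm-attaining point non-constructively: Banach--Alaoglu and $w^*$-continuity give a maximizer of $|g|$, the strong-peak-with-respect-to-the-norm property (via $c<1$ and $c^n<1/2$) confines every maximizer to the $\e/4$-ball around $x_1^*$, and the maximum modulus principle pushes it to $S_{X^*}$, with the constant case handled separately. Your quantitative details check out: $\eta=\e/20$, the inequality $(1-2\eta)(1+8\eta)>1+4\eta$ valid for $\eta<1/8$, the renormalization estimate $\|\widetilde g-f\|_\infty\le 16\eta<\e$, and the order in which $\eta$, $\lambda$, and $n$ are fixed are all correct. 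What the paper's route buys is an explicit peak point, no appeal to attainment of the supremum or to the maximum modulus principle, and a proof that transfers verbatim to the vector-valued Theorem~\ref{BPBpforAw2}; what yours buys is independence from the external Urysohn lemma (only the algebra structure, compactness of $(B_{X^*},w^*)$, and holomorphy are used), at the cost of an abstract maximizer and the extra degenerate case.
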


\noindent The following Urysohn type lemma plays an important role in the proof of the theorem.

\begin{lemma}\cite[Lemma 2.7]{CGK} \label{Urysohn} Let $A \subset C(K)$ be a unital uniform algebra and $\Gamma_0$ its Choquet boundary. Then, for any open subset $U$ of $K$ with $U \cap \Gamma_0 \ne\emptyset$ and for $0<\eps<1$, there exist $f\in A$ and $t_0\in U\cap\Gamma_0$ satisfying
\begin{itemize}
\item[(i)] $f(t_0) = \|f\|_{\infty} = 1$, \item[(ii)] $|f(t)|<\eps$ for every $t \in K \setminus U$ and
\item[(iii)] $|f(t)| + (1 - \e)|1 - f(t)| \leq 1$ for every $t \in K$.
\end{itemize}
\end{lemma}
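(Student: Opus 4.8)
The plan is to reduce condition (iii) to a one–complex–variable inclusion and to build the required $f$ by composing a peaking function from the Choquet boundary with a suitable function of the disk algebra. \emph{First}, since $t_0$ ranges over $U\cap\Gamma_0$ and $\Gamma_0$ is the Choquet boundary, each of its points is a peak point in the weak sense: for a parameter $\delta\in(0,1)$ to be fixed below there exist $t_0\in U\cap\Gamma_0$ and $p\in A$ with $p(t_0)=\|p\|_{\infty}=1$ and $|p(t)|<\delta$ for every $t\in K\setminus U$. Concretely this follows from the strong–peak–point description of $\Gamma_0$ recalled above: if $q$ is a strong peak function at $t_0\in U\cap\Gamma_0$, normalize so that $q(t_0)=1$, put $r=\sup_{t\in K\setminus U}|q(t)|<1$, and take $p=q^{\,n}$ with $n$ so large that $r^{\,n}<\delta$. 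Note $p(K)\subseteq\overline{\mathbb{D}}$.

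\emph{Second}, I would read condition (iii) geometrically. Setting $R_{\e}:=\{w\in\C:\ |w|+(1-\e)|1-w|\le 1\}$, condition (iii) says exactly that $f(t)\in R_{\e}$ for all $t\in K$. The set $R_{\e}$ is convex, contained in $\overline{\mathbb{D}}$, internally tangent to the unit circle at $w=1$, and has there a corner of half–opening $\arccos(1-\e)$. Hence it suffices to produce a function $\phi$, holomorphic on the open unit disk and continuous on $\overline{\mathbb{D}}$ (i.e.\ in the disk algebra), with $\phi(1)=1$, $\phi(\overline{\mathbb{D}})\subseteq R_{\e}$ and $\phi(0)=0$; then $f:=\phi\circ p$ does the job. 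Indeed, since $p(K)\subseteq\overline{\mathbb{D}}$ and $A$ is a closed subalgebra of $C(K)$ containing the constants, uniform approximation of $\phi$ by polynomials on $\overline{\mathbb{D}}$ yields $f=\phi\circ p\in A$. Property (i) holds because $f(t_0)=\phi(1)=1$ while $R_{\e}\subseteq\overline{\mathbb{D}}$ forces $\|f\|_{\infty}\le 1$; property (iii) holds because $f(K)\subseteq\phi(\overline{\mathbb{D}})\subseteq R_{\e}$; and property (ii) follows from $\phi(0)=0$ and continuity, by first choosing $\delta$ so small that $|\phi(z)|<\e$ whenever $|z|\le\delta$, and only afterwards fixing the power $n$ in the first step.

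\emph{Third}, the heart of the matter is the explicit construction of $\phi$. Because of the corner at $w=1$, a holomorphic map sending the boundary point $z=1$ to $w=1$ must contract the opening angle from $\pi$ down to (at most) $2\arccos(1-\e)$, which rules out affine or inner maps and points to a fractional power. I would take
\begin{equation*}
\phi(z)=1-(1-z)^{\beta},\qquad \beta=\log_2\frac{2}{2-\e},
\end{equation*}
with the principal branch, which is legitimate because $1-z$ stays in the closed right half–plane for $z\in\overline{\mathbb{D}}$; thus $\phi$ lies in the disk algebra and $\phi(0)=0$, $\phi(1)=1$. Writing $1-z=\rho e^{i\psi}$ with $|\psi|\le\pi/2$ and $0\le\rho\le 2\cos\psi$, and squaring the inequality $|1-\omega|\le 1-(1-\e)|\omega|$ for $\omega:=1-f=(1-z)^{\beta}$, a direct computation reduces the inclusion $\phi(\overline{\mathbb{D}})\subseteq R_{\e}$ to the single real inequality
\begin{equation*}
\cos(\beta\psi)\ \ge\ (1-\e)+\e\,\cos^{\beta}\psi,\qquad \psi\in[0,\pi/2],
\end{equation*}
where the value $\beta=\log_2\frac{2}{2-\e}$ is precisely the one that makes the worst modulus $\rho=2\cos\psi$ produce the clean coefficient $\e$ on the right.

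\textbf{Main obstacle.} The crux is verifying this last inequality for every $\e\in(0,1)$. It holds with equality at $\psi=0$ (which is how $\beta$ was calibrated, and which corresponds to the far point $z=-1$); it is comfortably satisfied at $\psi=\pi/2$ because one checks $\beta<\tfrac{2}{\pi}\arccos(1-\e)$ throughout $(0,1)$, so that $\cos(\beta\pi/2)>1-\e$; and a second–order expansion near $\psi=0$ gives the leading term $\tfrac{\beta\psi^{2}}{2}(\e-\beta)$, which is nonnegative because $\e>\beta$ on all of $(0,1)$. Turning these endpoint and local checks into one monotonicity/convexity argument valid uniformly on $[0,\pi/2]$ is the delicate technical point; everything else—the peak function, the membership $f=\phi\circ p\in A$, and properties (i)–(ii)—is then routine, and the pair $(f,t_0)$ so constructed satisfies (i), (ii) and (iii) simultaneously.
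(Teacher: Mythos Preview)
The paper does not prove this lemma; it merely quotes it from \cite[Lemma~2.7]{CGK} and uses it as a black box in the proof of Theorem~\ref{BPBpforAw}. So there is no proof in the paper to compare yours against.

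That said, your strategy is exactly the one used in \cite{CGK}: peak at a Choquet--boundary point inside $U$, and then post-compose with a disk--algebra map whose range sits inside the Stolz-type region $R_{\e}=\{w:|w|+(1-\e)|1-w|\le 1\}$. The reduction in your second step is correct, and the choice $\phi(z)=1-(1-z)^{\beta}$ with $\beta=\log_{2}\tfrac{2}{2-\e}$ is well motivated; the calibration of $\beta$ so that the worst modulus $\rho=2\cos\psi$ yields the clean right-hand side $(1-\e)+\e\cos^{\beta}\psi$ is neat. Two remarks, however.

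\emph{First}, you rely on ``the strong--peak--point description of $\Gamma_{0}$'' to produce the initial peaking function $p$. That identification of the Choquet boundary with the set of strong peak points requires $K$ to be metrizable; for a general compact Hausdorff $K$ (which is the setting of the lemma as stated and as needed here, since $B_{X^{*}}$ is not metrizable in general) one must instead use the weaker fact that every open set meeting $\Gamma_{0}$ contains a point at which some $p\in A$ peaks with $|p|<\delta$ off $U$. This is how \cite{CGK} proceeds, and your argument goes through once you invoke this version.

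\emph{Second}, and more seriously, the proof is incomplete at precisely the point you flag: the inequality
\[
\cos(\beta\psi)\ \ge\ (1-\e)+\e\cos^{\beta}\psi,\qquad \psi\in[0,\pi/2],
\]
is checked only at the endpoints and to second order near $\psi=0$. Your observations that $\beta<\e$ and $\beta<\tfrac{2}{\pi}\arccos(1-\e)$ on $(0,1)$ are correct, but they do not by themselves yield the inequality on the whole interval. Until this step is closed (e.g.\ by a monotonicity argument for the difference, or by bounding $\cos(\beta\psi)$ below and $\cos^{\beta}\psi$ above via concavity/convexity estimates), the proof of (iii) is not finished. Everything else---membership $f=\phi\circ p\in A$, and properties (i)--(ii)---is indeed routine once this inequality is established.
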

\noindent Now we are ready to prove Theorem~\ref{BPBpforAw}.

\begin{proof}[Proof of Theorem~\ref{BPBpforAw}] Let $\e \in (0, 1)$ be given and define $\eta(\e) :=\frac{\e}{4} > 0$. Let $f \in \A$ with $\|f\|_{\infty} = 1$ and $x_0^* \in S_{X^*}$ be such that
\begin{equation*}
|f(x_0^*)| > 1 - \eta(\e).
\end{equation*}
Note that
\begin{equation*}
U_1=\left\{ x^*\in B_{X^*} :  \left|\frac{f(x_0^*)}{|f(x_0^*)|}-f(x^*)\right| < \eta(\e)\right\}
\end{equation*}
is a nonempty $w^*$-open set on $B_{X^*}$. Since $\Gamma_s$ is norm dense in $S_{X^*}$, there is $z_0^* \in \Gamma_s$ such that
\begin{equation*}
z_0^*\in U_1 \ \ \ \mbox{and} \ \ \ \|z_0^* - x_0^*\| < \frac{\e}{2}.
\end{equation*}
Let $h \in \A$ be a strong peak function at $z_0^*$ with respect to the norm. Then $ \|h\|_{\infty} = |h(z_0^*)| = 1$ and
\begin{equation} \label{BPBp1}
r := \sup_{y^* \in B_{X^*} \setminus B \left(z_0^*,\frac{\e}{2}\right)} |h(y^*)| < 1.
\end{equation}
Now define
\begin{equation*}
U_2 := U_1 \cap \{ x^* \in B_{X^*} : |h(x^*)| > r\}
\end{equation*}
which is a $w^*$-open set on $B_{X^*}$. Note that $\Gamma_s$ is contained in the Choquet boundary of $\A$. Since $z_0^* \in U_2 \cap \Gamma_s$, we can apply Lemma~\ref{Urysohn} to get $\phi \in \A$ and $x_1^* \in U_2$ satisfying the following conditions:
\begin{itemize}
\item[(i)] $\phi(x_1^*) = \|\phi\|_{\infty} = 1$,
\item[(ii)] $|\phi(x^*)| < \eta(\e)$ for all $x^* \in B_{X^*} \setminus U_2$ and
\item[(iii)] $|\phi(x^*)|+(1-\eta(\e))|1-\phi(x^*)|\le 1$ for all $x^*\in B_{X^*}$.
\end{itemize}
Now let, for $x^*\in B_{X^*}$,
\begin{equation*}
g(x^*) :=  \frac{f(x_0^*)}{|f(x_0^*)|} \phi(x^*) + (1 - \eta(\e))(1 - \phi(x^*))f(x^*).
\end{equation*}
Then $g\in \A$ and, by (i), we have $|g(x_1^*)| = \phi(x_1^*) = 1$.
By using (iii), for all $x^*\in B_{X^*}$, we get that
\begin{equation*}
|g(x^*)|\le |\phi(x^*)| +(1-\eta(\e))|1-\phi(x^*)|\le 1.
\end{equation*}
This shows that $\|g\|_{\infty} = |g(x_1^*)| = 1$. Since $x_1^* \in U_2$, we have that $|h(x_1^*)| > r$ and using (\ref{BPBp1}), $\|x_1^* - z_0^*\| < \frac{\e}{2}$. This implies that
\begin{equation*}
\|x_1^* - x_0^*\| \leq \|x_1^* - z_0^*\| + \|z_0^* - x_0^*\| < \e.
\end{equation*}
Finally we show that $\|g - f\|_{\infty} < \e$.
Indeed, we write
\begin{equation*}
g - f =  \left(\frac{f(x_0^*)}{|f(x_0^*)|} - f\right) \phi  - \eta(\e)(1 - \phi)f.
\end{equation*}
If $x^*\in U_2$, then
\begin{equation*}
\|g(x^*) - f(x^*)\| < \|\phi\|_{\infty} \eta(\e)+ \eta(\e) ( 1 + \|\phi\|_{\infty}) \|f\|_{\infty} = 3\eta(\e) < \e.
\end{equation*}
If $x^*\in B_{X^*}\setminus U_2$, then we use (ii) to get
\begin{equation*}
\|g(x^*) - f(x^*)\| \leq 2|\phi(x^*)| + \eta(\e)(1 + |\phi(x^*)|)\|f\|_{\infty} \\
<  4 \eta(\e) = \e.
\end{equation*}
Therefore, we have $\|g - f\|_{\infty} < \e$ and this finishes the proof.
\end{proof}

In \cite[Theorem B]{CM}, the authors proved that if $X$ is a Banach space whose dual is separable and has the approximation property, then the set of analytic functions whose Aron-Berner extensions attain their norm is dense in $\mathcal{A}_u(B_X)$. On the other hand, \cite[Proposition 4.4]{CLM} gives us a counterexample of a space which does not satisfy a Lindenstrauss-Bollob\'as type theorem for multilinear forms or multilinear mappings. Here, since $\mathcal{A}_{wu}(B_X)$ is isometrically isomorphic to $\mathcal{A}_{w^*u}(B_{X^{**}})$ (see \cite[Theorem 6.3]{ACG}), we have the following consequence of Theorem \ref{BPBpforAw}.

\begin{prop} Let $X$ be a complex Banach space and let $\Gamma_s(B_{X^{**}})$ be the set of all strong peak points for $A_{w^*u}(B_{X^{**}})$ with respect to the norm. Suppose that $\Gamma_s(B_{X^{**}})$ is dense in $S_{X^{**}}$. Then, given $\e > 0$, there exists $\eta(\e) > 0$ such that whenever $f \in \mathcal{A}_{wu}(B_X)$ with $\|f\|_{\infty} = 1$ and $x_0 \in S_X$ satisfy
\begin{equation*}
|f(x_0)| > 1 - \eta(\e),
\end{equation*}
there are $g \in S_{\mathcal{A}_{wu}(B_X)}$ and $z_0^{**} \in S_{X^{**}}$ such that
\begin{equation*}
|AB{g}(z_0^{**})| = 1, \ \ \ \|g - f\|_{\infty} < \e \ \ \ \mbox{and} \ \ \ \|z_0^{**} - x_0\| < \e,
\end{equation*}
where $ABg\in {\mathcal{A}_{w^*u}(B_{X^{**}})}$ stands for the Aron-Berner extension of $g$ to  $B_{X^{**}}$.
\end{prop}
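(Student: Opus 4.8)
The plan is to deduce this proposition directly from Theorem~\ref{BPBpforAw} by transporting everything through the Aron--Berner extension. Recall from \cite[Theorem 6.3]{ACG} that the map $AB\colon \mathcal{A}_{wu}(B_X) \to \mathcal{A}_{w^*u}(B_{X^{**}})$ sending $f$ to its Aron--Berner extension $ABf$ is a surjective isometric isomorphism, and that $ABf$ restricts to $f$ on $B_X$ once $X$ is identified with its canonical image in $X^{**}$; in particular $\|ABf\|_{\infty} = \|f\|_{\infty}$ for every $f$, and $S_X$ sits isometrically inside $S_{X^{**}}$. Since $X$ is complex, $X^*$ is a complex Banach space, so Theorem~\ref{BPBpforAw} is applicable to it.

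First I would apply Theorem~\ref{BPBpforAw} with the Banach space ``$X$'' of that statement taken to be $X^*$, so that the dual ``$X^*$'' there becomes $X^{**}$ and the algebra ``$\A$'' becomes $\mathcal{A}_{w^*u}(B_{X^{**}})$. The hypothesis that $\Gamma_s(B_{X^{**}})$ is norm dense in $S_{X^{**}}$ is exactly the density assumption needed, so Theorem~\ref{BPBpforAw} furnishes $\eta(\e) > 0$ (concretely $\eta(\e) = \e/4$) such that any $F \in \mathcal{A}_{w^*u}(B_{X^{**}})$ with $\|F\|_{\infty} = 1$ that almost attains its norm at some $z^{**} \in S_{X^{**}}$, in the sense $|F(z^{**})| > 1 - \eta(\e)$, can be approximated within $\e$ by some $G \in \mathcal{A}_{w^*u}(B_{X^{**}})$ with $\|G\|_{\infty} = 1$ attaining its norm at a point $z_0^{**} \in S_{X^{**}}$ with $\|z_0^{**} - z^{**}\| < \e$. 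For $\e \geq 1$ one simply applies the case $\e = 1/2$, so there is no loss in assuming $\e \in (0,1)$.

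The second step is the pull-back. Given $f \in \mathcal{A}_{wu}(B_X)$ with $\|f\|_{\infty} = 1$ and $x_0 \in S_X$ with $|f(x_0)| > 1 - \eta(\e)$, I would set $F := ABf$; then $\|F\|_{\infty} = 1$, and since $ABf$ extends $f$ we have $|F(x_0)| = |f(x_0)| > 1 - \eta(\e)$ with $x_0$ regarded as an element of $S_{X^{**}}$. Applying the first step with $z^{**} = x_0$ yields $G$ and $z_0^{**}$ as above; by surjectivity of $AB$ there is $g \in \mathcal{A}_{wu}(B_X)$ with $ABg = G$, and because $AB$ is an isometry, $\|g\|_{\infty} = \|G\|_{\infty} = 1$ and $\|g - f\|_{\infty} = \|ABg - ABf\|_{\infty} = \|G - F\|_{\infty} < \e$, while $|ABg(z_0^{**})| = |G(z_0^{**})| = 1$. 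This is exactly the claimed conclusion.

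The substantive content, and the only place that needs care, is the input from \cite[Theorem 6.3]{ACG}: that the Aron--Berner extension is a bijective linear isometry between $\mathcal{A}_{wu}(B_X)$ and $\mathcal{A}_{w^*u}(B_{X^{**}})$ and that it genuinely prolongs the original function, so that ``$f$ nearly attains its norm at $x_0 \in S_X$'' translates into ``$ABf$ nearly attains its norm at $x_0 \in S_{X^{**}}$''. Granting this (together with the coincidence $\mathcal{A}_{w^*} = \mathcal{A}_{w^*u}$ already recorded from \cite{ACG}), the proposition is essentially a restatement of Theorem~\ref{BPBpforAw} on the bidual ball, with the same modulus $\eta(\e)$; I do not expect any genuine obstacle beyond this translation.
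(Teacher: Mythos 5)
Your proposal is correct and follows exactly the route the paper intends: the paper states this proposition without proof as an immediate consequence of Theorem~\ref{BPBpforAw} applied to $X^*$ (so that the relevant algebra is $\mathcal{A}_{w^*u}(B_{X^{**}})$), transported back via the isometric isomorphism $AB\colon \mathcal{A}_{wu}(B_X)\to\mathcal{A}_{w^*u}(B_{X^{**}})$ of \cite[Theorem 6.3]{ACG}. You have simply written out the details of that translation, including the harmless reduction to $\e\in(0,1)$.
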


Let $Y$ be a complex Banach space. We denote by $\mathcal{A}_{w^*u}(B_{X^*}, Y)$ the space of all $Y$-valued holomorphic functions on the interior of $B_{X^*}$ which are $w^*$-to-norm uniformly continuous on $B_{X^*}$ equipped with the supremum norm.
Theorem~\ref{BPBpforAw} can be extended to the vector-valued case as follows by using the same proof.

\begin{theorem} \label{BPBpforAw2} Let $X$ and $Y$ be complex Banach spaces and suppose that $\Gamma_s$ is norm dense in $S_{X^*}$. Then, given $\e \in (0, 1)$, there exists $\eta(\e) > 0$ such that whenever $f \in \mathcal{A}_{w^*u}(B_{X^*}, Y)$ with $\|f\|_{\infty} = 1$ and $x_0^* \in S_{X^*}$ satisfy
\begin{equation*}	
\|f(x_0^*)\|_Y > 1 - \eta(\e),
\end{equation*}
there are $g \in \mathcal{A}_{w^*u}(B_{X^*}, Y)$ with $\|g\|_{\infty} = 1$ and $x_1^*\in S_{X^*}$ such that
\begin{equation*}
\|g(x_1^*)\|_Y = 1, \ \ \  \|g - f\|_{\infty} < \e \ \ \ \mbox{and} \ \ \  \|x_1^* - x_0^* \| < \e.
\end{equation*}
\end{theorem}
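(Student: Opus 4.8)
The plan is to mimic the scalar proof of Theorem~\ref{BPBpforAw} almost verbatim, replacing absolute values of complex numbers by $Y$-norms at the points where $f$ and $g$ take vector values, while keeping $\phi$ scalar-valued so that Lemma~\ref{Urysohn} still applies. First I would fix $\e \in (0,1)$, set $\eta(\e) := \e/4$, and take $f \in \mathcal{A}_{w^*u}(B_{X^*},Y)$ with $\|f\|_\infty = 1$ together with $x_0^* \in S_{X^*}$ satisfying $\|f(x_0^*)\|_Y > 1 - \eta(\e)$. The one genuinely new ingredient is the choice of a unimodular scalar in place of the factor $f(x_0^*)/|f(x_0^*)|$: I would pick a functional $y_0^* \in S_{Y^*}$ with $\re y_0^*(f(x_0^*)) = \|f(x_0^*)\|_Y$ (Hahn--Banach), and then, after possibly rotating, arrange $|y_0^*(f(x_0^*))| = \|f(x_0^*)\|_Y$ with the value on the positive real axis; the scalar holomorphic function $x^* \mapsto y_0^*(f(x^*))$ lies in $\A$ and plays the role that $f$ itself played in the scalar argument for defining the open sets.

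Next I would run the geometric core exactly as before: the set
\begin{equation*}
U_1 = \left\{ x^* \in B_{X^*} : \left| \frac{y_0^*(f(x_0^*))}{|y_0^*(f(x_0^*))|} - y_0^*(f(x^*)) \right| < \eta(\e) \right\}
\end{equation*}
is a nonempty $w^*$-open subset of $B_{X^*}$, so by norm-density of $\Gamma_s$ there is $z_0^* \in \Gamma_s \cap U_1$ with $\|z_0^* - x_0^*\| < \e/2$; choose a strong peak function $h \in \A$ at $z_0^*$ with respect to the norm, set $r := \sup\{|h(y^*)| : y^* \in B_{X^*} \setminus B(z_0^*, \e/2)\} < 1$, and put $U_2 := U_1 \cap \{x^* : |h(x^*)| > r\}$, a $w^*$-open set containing $z_0^* \in \Gamma$. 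Applying Lemma~\ref{Urysohn} to $U_2$ yields $\phi \in \A$ and $x_1^* \in U_2$ with (i)--(iii) as stated. Then define, for $x^* \in B_{X^*}$,
\begin{equation*}
g(x^*) := \frac{y_0^*(f(x_0^*))}{|y_0^*(f(x_0^*))|}\, \phi(x^*)\, y_1 + (1 - \eta(\e))(1 - \phi(x^*)) f(x^*),
\end{equation*}
where $y_1 \in S_Y$ is a fixed vector with $y_0^*(y_1) = 1$ (so that $\|\lambda y_1\|_Y = |\lambda|$ and $y_0^*$ exposes it); this $g$ lies in $\mathcal{A}_{w^*u}(B_{X^*},Y)$ since $\phi, f$ do and the operations are continuous and holomorphic-preserving.

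The verification that $\|g\|_\infty = \|g(x_1^*)\|_Y = 1$ goes through by replacing moduli with $Y$-norms: at $x_1^*$ we have $\phi(x_1^*) = 1$, so $g(x_1^*) = \frac{y_0^*(f(x_0^*))}{|y_0^*(f(x_0^*))|} y_1$ has norm $1$; for the upper bound, $\|g(x^*)\|_Y \le |\phi(x^*)| + (1 - \eta(\e))|1 - \phi(x^*)| \le 1$ by (iii). The estimate $\|x_1^* - x_0^*\| < \e$ is unchanged, coming from $x_1^* \in U_2 \Rightarrow |h(x_1^*)| > r \Rightarrow \|x_1^* - z_0^*\| < \e/2$ and the triangle inequality. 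Finally, writing
\begin{equation*}
g - f = \left( \frac{y_0^*(f(x_0^*))}{|y_0^*(f(x_0^*))|} y_1 - f \right)\phi - \eta(\e)(1 - \phi) f,
\end{equation*}
one splits into $x^* \in U_2$ (where $\phi$ is small in the first term is not needed; rather one uses that $\left\| \frac{y_0^*(f(x_0^*))}{|y_0^*(f(x_0^*))|} y_1 - f(x^*) \right\|_Y$ is controlled on $U_1 \supset U_2$ via the definition of $U_1$ — here is the only subtlety: on $U_1$ we only control $y_0^* \circ f$, not $f$ itself, so I would instead estimate crudely using $\|\phi\|_\infty = 1$, $\|f\|_\infty = 1$ exactly as in the scalar proof, getting $\|g(x^*) - f(x^*)\|_Y < 3\eta(\e)$ on $U_2$) and $x^* \in B_{X^*} \setminus U_2$ (where (ii) gives $|\phi(x^*)| < \eta(\e)$, hence $\|g(x^*) - f(x^*)\|_Y \le 2|\phi(x^*)| + \eta(\e)(1 + |\phi(x^*)|)\|f\|_\infty < 4\eta(\e) = \e$). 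I expect the only point requiring care is precisely this last bookkeeping: in the scalar proof the first term $\left(\frac{f(x_0^*)}{|f(x_0^*)|} - f\right)\phi$ is bounded on $U_2$ simply by $2\|\phi\|_\infty \eta(\e)$ using $\|\phi\|_\infty = 1$ and $\|f\|_\infty = 1$ — wait, that is not right either, it is bounded by $2\|\phi\|_\infty$ there times nothing small. Re-reading the scalar proof, on $U_2$ it uses $\|\phi\|_\infty \eta(\e)$, which suggests the bound there is really $\|\phi(x^*)\| \cdot \eta(\e)$ is wrong too; in fact the scalar proof bounds the first term on $U_2$ by $\eta(\e)$ because $\left|\frac{f(x_0^*)}{|f(x_0^*)|} - f(x^*)\right| < \eta(\e)$ on $U_1$. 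So in the vector case I must likewise use $y_1$ chosen so that this comparison survives: since on $U_1$ we have $|y_0^*(f(x_0^*))/|y_0^*(f(x_0^*))| - y_0^*(f(x^*))| < \eta(\e)$, but $f(x^*) \ne y_0^*(f(x^*)) y_1$ in general, the clean comparison fails. The fix, and the real content of "the same proof works", is to observe that it suffices to bound $\|g - f\|_\infty$ by $\e$ using only $\|\phi\|_\infty \le 1$, $\|f\|_\infty \le 1$, $\eta(\e) = \e/4$, and (ii); a direct triangle-inequality estimate gives $\|g(x^*) - f(x^*)\|_Y \le |\phi(x^*)|\,\|y_1\|_Y + |\phi(x^*)|\,\|f(x^*)\|_Y + \eta(\e)\,|1-\phi(x^*)|\,\|f(x^*)\|_Y$, which on $U_2$ is not automatically small — so one genuinely needs $U_1$ to also force $f(x^*)$ itself (not just $y_0^*(f(x^*))$) close to $\frac{y_0^*(f(x_0^*))}{|y_0^*(f(x_0^*))|} y_1$. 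Thus I would \emph{redefine} $U_1$ vector-valued as $\left\{ x^* \in B_{X^*} : \left\| \frac{y_0^*(f(x_0^*))}{|y_0^*(f(x_0^*))|} y_1 - f(x^*) \right\|_Y < \eta(\e) \right\}$; this is still $w^*$-open (as $f$ is $w^*$-to-norm continuous) and still nonempty (it contains $x_0^*$, since $\|f(x_0^*)\|_Y > 1 - \eta(\e)$ forces $f(x_0^*)$ to be within $\eta(\e)$ of the unimodular multiple of $y_1$ — this uses that $y_1$ is exposed by $y_0^*$, so that the nearest point of the disc $\{\lambda y_1\}$ to $f(x_0^*)$ is within $2\eta(\e)$; one may shrink constants accordingly, e.g. take $\eta(\e) = \e/12$). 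With this redefinition every line of the scalar proof transcribes directly and the theorem follows; the sole obstacle, now resolved, was ensuring the open set $U_1$ pins down the full vector value of $f$, which it does because of the Hahn--Banach choice of $y_0^*$ and $y_1$.
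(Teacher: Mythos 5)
Your overall strategy is the right one --- and it is essentially what the authors mean by ``the same proof'': keep $\phi$ and $h$ scalar so that Lemma~\ref{Urysohn} applies verbatim, replace the unimodular scalar $f(x_0^*)/|f(x_0^*)|$ by a fixed unit vector of $Y$, and define $U_1$ using the $Y$-norm so that it controls the full vector value of $f$ (you correctly identified that controlling only $y_0^*\circ f$ is not enough for the $\|g-f\|_\infty$ estimate on $U_2$). However, your final resolution contains a genuine error: you claim that the redefined
\begin{equation*}
U_1=\left\{ x^*\in B_{X^*} : \left\| \tfrac{y_0^*(f(x_0^*))}{|y_0^*(f(x_0^*))|}\, y_1 - f(x^*)\right\|_Y<\eta(\e)\right\}
\end{equation*}
contains $x_0^*$ because ``$y_1$ is exposed by $y_0^*$, so the nearest point of the disc $\{\lambda y_1\}$ to $f(x_0^*)$ is within $2\eta(\e)$.'' This is false for a general Banach space $Y$: the conditions $y_0^*(y_1)=1=\|y_1\|$ and $y_0^*(f(x_0^*))=\|f(x_0^*)\|_Y>1-\eta$ give no quantitative proximity of $f(x_0^*)$ to the disc $\{\lambda y_1:|\lambda|\le 1\}$ unless $y_1$ is \emph{strongly} exposed (uniformly), which fails already in $Y=\ell_\infty^2$: take $f(x_0^*)=(1,0)$, $y_0^*=e_1^*$, $y_1=(1,1)$; then $\|\lambda y_1-(1,0)\|_\infty\ge 1$ for every $|\lambda|=1$. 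Since the whole argument needs $U_1$ to contain a norm-neighbourhood of $x_0^*$ (so that density of $\Gamma_s$ produces $z_0^*\in U_1$ with $\|z_0^*-x_0^*\|<\e/2$), this step as written breaks down.

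The repair is immediate and removes the Hahn--Banach functional altogether: take $y_1:=f(x_0^*)/\|f(x_0^*)\|_Y$, the exact analogue of the scalar $f(x_0^*)/|f(x_0^*)|$. Then $\|y_1-f(x_0^*)\|_Y=1-\|f(x_0^*)\|_Y<\eta(\e)$, so $x_0^*\in U_1$ and $U_1$ is a nonempty $w^*$-open (hence norm-open) set by the $w^*$-to-norm uniform continuity of $f$; with $g:=\phi\,y_1+(1-\eta(\e))(1-\phi)f$ every remaining line of the scalar proof transcribes verbatim (on $U_2$ the first term of $g-f$ is $(y_1-f(x^*))\phi(x^*)$, of norm at most $\eta(\e)$, and the rest of the bookkeeping is unchanged, with $\eta(\e)=\e/4$ rather than your precautionary $\e/12$). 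With this choice your argument is complete and coincides with the intended proof.
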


We observe that there are Banach spaces $Y$ such that the Bishop-Phelps-Bollob\'as theorem does not hold for bounded linear operators from $\ell_1$ into $Y$ (see \cite[Remark 2.4]{AAGM}). Moreover, in \cite[Corollary 3.3]{ACKLM}, it was shown that such theorem for operators from $\ell_1^2$ into some Banach space $Y$ is also not true where $\ell_p^n$ is $\mathbb{R}^n$ with $\ell_p$ norm for $n\in \mathbb{N}$. Nevertheless, we will prove that if $X^* = \ell_1^n$ or $\ell_1$, then Theorem~\ref{BPBpforAw2} holds for all complex Banach spaces $Y$ (see Proposition \ref{BPBpforAw3} below). It is worth to mention that in Bishop-Phelps-Bollob\'as type theorem we consider norm-to-norm continuous operators and here we are considering $w^*$-to-norm continuous functions. These two continuities coincide when the domain is finite dimensional.

Next we present some spaces satisfying the condition that  $\Gamma_s$ is norm dense in $S_{X^*}$.
Let $X$ be a subspace of  $\mathbb{C}^J$ for a set $J$. If $x=(x(j))_{j\in J}$ is an element of $\mathbb{C}^J$, the absolute value $|x|$ is defined to be $|x|=(|x(j)|)_{j \in J}$. Let $x=(x(j))_{j \in J}$ and $y=(y(j))_{j \in J}$. We say that $x\le y$ whenever $x, y \in \mathbb{R}^J$ and $x(j) \le y(j)$ for all $j$. A norm $\| \cdot \|$ on $X$ is said to be an absolute norm if $(X, \|\cdot\|)$ is a Banach space and, if $x \in X$ and $|y| \leq |x|$ for some $y \in \mathbb{C}^J$ we have that $y\in X$ and $\|y\| \leq \|x\|$. In this case, we call $X$ as an absolute normed space. We denote by $e_j$ the $j$-th standard unit vector defined by $e_j(i) =0$ if $i\neq j$ and $e_j(j)=1$ for all $j$. We also assume that absolute normed spaces contain each $e_j$ and $\|e_j\|=1$. Notice that absolute normed spaces are complex Banach lattices they can be viewed as K\"{o}the spaces on the measure space $(J,2^J,\nu)$ where $\nu$ is the counting measure on a set $J$. We say that a Banach lattice $X$ is order continuous if every downward directed set $\{x_\alpha\}$  in $X$ with $\bigwedge_\alpha x_\alpha=0$, $\lim_\alpha \|x_\alpha\|=0$. For the reference about the order continuity and K\"othe spaces, see \cite{LT}.

It is well-known that an absolute normed space $X$ is order continuous if and only if $X$ is the closed linear span of the set $\{ e_j : j\in J\}$ of all standard unit vector basis (for the reference of this fact, see the proof of Proposition~7.1 in \cite{KLMM}.) If $X$ is order continuous, then $X^*$ is also an absolute normed space \cite[P. 29]{LT}. In fact, the dual space $X^*$ can be identified with the K\"othe dual $X'$ consisting of all $x^*=(x^*(j))_{j \in J}$ in $\mathbb{C}^J$ such that
\[ \|x^*\| = \sup\left\{ \left|\sum_{j\in J} x^*(j)x(j)  \right|: x=(x(j))_{j \in J} \in S_X\right\}<\infty.\]
The standard unit vector in $X^*$ will be denoted by $e_j^*$.

Given a complex Banach space $X$, the {\it modulus of complex convexity} is defined by
\[ H(\eps) = \inf \left\{ \sup_{0\le \theta\le 2\pi}\|x+e^{i\theta} y \| -1: x \in S_X, \|y\|\ge \eps \right\}.\]
A Banach space is said to be {\it uniformly complex convex} whenever $H(\eps)>0$ for all $\eps>0$.  The complex convexity and monotonicity is closely related in Banach lattices. Recall that a Banach space is said to be {\it uniformly monotone} if, for all $\eps>0$, we have
\begin{equation*}
M(\eps) = \inf \left\{ \||x| + |y|\| -1: \ x \in S_{X}, \ \|y\|\geq \eps \right\} > 0.
\end{equation*}
It is shown in \cite[Theorem 3.5]{Lee} that a complex Banach lattice (an absolute normed space) is uniformly complex convex if and only if it is uniformly monotone. It is also shown \cite[Proposition 2.2]{Lee} that if a complex Banach lattice is uniformly monotone, then it is order continuous. We will use these facts in the next result. It is known that $\ell_p$ and $\ell_p^n$ are uniformly monotone (and so is uniformly complex convex) for $1\le p<\infty$. The uniform complex convexity of Orlicz-Lorentz spaces is characterized in \cite{CKL}.

\begin{prop} \label{BPBpforAw3}
Suppose that $X$ is an order continuous subspace of $\mathbb{C}^J$ with an absolute norm and suppose that $X^*$ is uniformly complex convex. Let $F$ be a finite dimensional subspace spanned by a finite number of standard unit vectors of $X^*$. Then $S_F$ is contained in $\Gamma_s$ and $\Gamma_s$ is norm dense in $S_{X^*}$.
\end{prop}

\begin{proof} Suppose that $X^*$ is uniformly complex convex. Then it is uniformly monotone. By the above discussion, $X^*$ is order continuous. So $X^*$ is the norm closure of the span of all standard unit vectors $e_j^*$ in $X^*$. Thus it is enough to show that $S_F$ is contained in $\Gamma_s$.

Suppose that $F$ is the linear span of $\{e_j^*: j\in J_0\}$, where $J_0$ is a finite subset of $J$. Let $P:X^* \longrightarrow F$ be the natural projection defined by $P(x^*) = \sum_{j\in J_0} x^*(e_j)e_j^*$. Note that $P$ is $w^*$-to-norm continuous and $\|x^*\| = \| |Px^*| + |(I-P)x^*| \|$ for all $x^*\in X^*$.
Let $x_0^* \in S_{F}$. Since $X^*$ is uniformly complex convex, $S_F = \ext_{\C}(B_F)$ and  this is exactly the set of all strong peak points for $A_u(B_F)$  because $F$ is finite dimensional \cite[Proposition 1.1]{CHL}.
So there is a strong peak function $g$ at $x_0^*$  in $\mathcal{A}_u(B_{F})$.

We will show that the function $f: B_{X^*} \longrightarrow \K$ defined by
\begin{equation*}
f(x^*) := (g \circ P)(x^*) \ \ \ (x^* \in B_{X^*})
\end{equation*}
is a strong peak function for $\A$ at $x_0^*$ with respect to the norm and the proof will be done. Note first that $f\in \A$. To show that it is a strong peak function at $x_0^*$ with respect to the norm, assume that there are $\delta_0 > 0$ and a sequence $\{x_k^*\} \subset B_{X^*}$ such that $\lim_{k \rightarrow \infty} |f(x_k^*)| = 1$ but $\|x_k^* - x_0^*\| \geq \delta_0$ for every $k \in \N$. Since $\lim_{k \rightarrow \infty} |g(P(x_k^*))| = 1$ and $g$ is a strong peak function at $x_0^*$ in $A_u(B_F)$,  we get that $\lim_{k\rightarrow \infty}\|Px_k^* -x_0^*\|=0$.
Since $\|(Px_k^*-x_0^*) + (I-P)(x_k^*)\| = \|x_k^* - x_0^* \| \geq \delta_0$, we may assume that $\|(I-P)(x_k^*)\|\ge \delta_0/2$ for all $k$. Nevertheless, for all $k \in \N$, we get that
\begin{eqnarray*}
1 \ge \|x^*_k\| &=& \|P(x_k^*) + (I-P)(x_k^*)\| \\
&\geq& \|P(x_0^*) + (I-P)(x_k^*)\|-\|x_0^* - Px_k^*\| \\
&=& \||P(x_0^*)| + |(I-P)(x_k^*)|\|-\|x_0^* - Px_k^*\| \\
&\geq& 1+ M(\delta_0/2) -\|x_0^* - Px_k^*\|.
\end{eqnarray*}
This shows that $M(\delta_0/2)\le 0$ which is a contradiction since $X^*$ is uniformly monotone.
\end{proof}

Recall that $x^*_0 \in B_{X^*}$ is said to be a $w^*${\it -strongly exposed point} if there exists $x_0 \in S_{X}$ such that $x^*_0(x_0) = 1$ and $\lim_{\delta \rightarrow 0+}{\rm diam}( S(x_0, \delta) ) = 0$,
where $S(x_0, \delta) = \left\{ x^*\in B_{X^*} : {\rm Re} \ x^*(x_0)>1-\delta\right\}$. We have the following proposition.

\begin{prop} \label{BPBpforAw4} Let $X$ be a complex Banach space. If $x_0^*\in B_{X^*}$ is a $w^*$-strongly exposed point, then $x_0^*$ is a strong peak point for $\A$ with respect to the norm. In particular, if $X$ is reflexive and $X^*$ is locally uniformly convex, then $\Gamma_s = S_{X^*}$.
\end{prop}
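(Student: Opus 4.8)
The plan is to produce, directly from the exposing functional, an explicit strong peak function at $x_0^*$ with respect to the norm. Let $x_0 \in S_X$ be such that $x_0^*(x_0) = 1$; since $x_0^* \in B_{X^*}$ this already forces $\|x_0^*\| = 1$. Recalling that the $w^*$-continuous linear functionals on $X^*$ are precisely the evaluations at points of $X$, I would take
\[
f(x^*) := \frac{1 + x^*(x_0)}{2} \qquad (x^* \in B_{X^*}).
\]
This $f$ is affine and $w^*$-continuous on $B_{X^*}$ and holomorphic on the interior of $B_{X^*}$, so $f \in \A$ (using $\mathcal{A}_{w^*}(B_{X^*}) = \mathcal{A}_{w^*u}(B_{X^*})$). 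Since $|x^*(x_0)| \leq 1$ we have $|f(x^*)| \leq \tfrac12\bigl(1 + |x^*(x_0)|\bigr) \leq 1$ for all $x^* \in B_{X^*}$, while $f(x_0^*) = 1$; hence $\|f\|_{\infty} = |f(x_0^*)| = 1$.

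It then remains to check that $f$ is a strong peak function at $x_0^*$ with respect to the norm. Suppose $\{x_n^*\} \subset B_{X^*}$ satisfies $|f(x_n^*)| \to 1$, and put $z_n := x_n^*(x_0) \in \overline{\mathbb D}$, so that $|1 + z_n| \to 2$. From $|1 + z_n|^2 = 1 + 2\,\re z_n + |z_n|^2 \leq 2 + 2\,\re z_n$ one gets $\re z_n \to 1$, and then $|z_n - 1|^2 \leq 2 - 2\,\re z_n \to 0$; in particular $\re x_n^*(x_0) \to 1$. Thus, for every $\delta > 0$, we have $x_n^* \in S(x_0, \delta)$ for all large $n$, whereas $x_0^* \in S(x_0, \delta)$ for every $\delta$, so $\|x_n^* - x_0^*\| \leq {\rm diam}\, S(x_0, \delta)$ eventually. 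Letting $\delta \to 0^+$ and using that $x_0^*$ is $w^*$-strongly exposed by $x_0$ yields $\|x_n^* - x_0^*\| \to 0$. Hence $x_0^* \in \Gamma_s$, which proves the first assertion.

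For the ``in particular'' part, let $X$ be reflexive with $X^*$ locally uniformly convex and fix $x_0^* \in S_{X^*}$. By reflexivity $B_X$ is weakly compact and $x_0^*$ is weakly continuous, so $\|x_0^*\|$ is attained; multiplying a maximizer by a suitable unimodular scalar we obtain $x_0 \in S_X$ with $x_0^*(x_0) = 1$. I would then show ${\rm diam}\, S(x_0, \delta) \to 0$ as $\delta \to 0^+$: if $x_n^* \in S(x_0, \delta_n)$ with $\delta_n \to 0$, then $\|x_n^*\| \to 1$, and setting $u_n := x_n^*/\|x_n^*\| \in S_{X^*}$, the value of $x_0$ (viewed in $X = X^{**}$) at $x_0^* + u_n$ has real part tending to $2$, so $\|x_0^* + u_n\| \to 2$; local uniform convexity of $X^*$ then forces $u_n \to x_0^*$, hence $x_n^* \to x_0^*$. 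Consequently $x_0^*$ is $w^*$-strongly exposed (the exposing functional $x_0$ lying in the predual $X$ precisely because $X$ is reflexive), so by the first part $x_0^* \in \Gamma_s$; since $\Gamma_s \subseteq S_{X^*}$ always, we conclude $\Gamma_s = S_{X^*}$.

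The argument is essentially elementary, and I do not expect a real obstacle. The two points needing a little care are the passage from ``$|f|$ close to $1$'' to ``$x^*(x_0)$ close to $1$'' — where one must use $|x^*(x_0)| \le 1$ to control the modulus and not merely the real part, which is exactly why the Möbius-type normalization $z \mapsto (1+z)/2$ is introduced rather than using $x^* \mapsto x^*(x_0)$ itself — and, in the last part, ensuring that the functional witnessing $\|x_0^* + u_n\| \to 2$ lives in $X$, which is where reflexivity is used to upgrade ``strongly exposed'' to ``$w^*$-strongly exposed''.
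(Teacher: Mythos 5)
Your proposal is correct and follows essentially the same route as the paper: the same peak function $f(x^*)=\tfrac{1}{2}(1+x^*(x_0))$, the same elementary estimate forcing $x_n^*(x_0)\to 1$, and the same use of local uniform convexity of $X^*$ (via $\|x_n^*+x_0^*\|\to 2$) for the reflexive case. Your normalization $u_n:=x_n^*/\|x_n^*\|$ before applying local uniform convexity is a slightly more careful version of the paper's argument, but the substance is identical.
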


\begin{proof} Suppose that $x_0^* \in S_{X^*}$ is a $w^*$-strongly exposed point. Then there exists $x_0 \in S_X$ such that $\re x_0^*(x_0) = 1$ and for every $\{x_n^*\} \subset B_{X^*}$ with $\re x_n^*(x_0) \longrightarrow 1$, we have that $\|x_n^* - x_0^*\| \longrightarrow 0$ when $n \rightarrow \infty$.

  Define $f: B_{X^*}	\longrightarrow \C$ by
	\begin{equation}
	f(x^*) := \frac{1 + x^*(x_0)}{2} \ \ \ (x^* \in B_{X^*}).	
	\end{equation}
Then $f \in \A$, $\|f\|_{\infty} \leq 1$ and
\begin{equation*}
\|f\|_{\infty} \geq \frac{1 + \re x_0^*(x_0)}{2} = 1.	
\end{equation*}	

Hence, $\|f\|_{\infty} = f(x_0^*) = 1$. Let $\{x_n^*\} \subset B_{X^*}$ satisfy that $|f(x_n^*)| \longrightarrow 1$ as $n \rightarrow \infty$ which means $\left| \frac{1 + x_n^*(x_0)}{2} \right| \longrightarrow 1$. Since
\begin{equation*}
|1 + x_n^*(x_0)|^2 + |1 - x_n^*(x_0)|^2 = 2 (1 + |x_n^*(x_0)|^2),
\end{equation*}
we have that
\begin{equation*}
|1 - x_n^*(x_0)|^2 \leq 4 - |1 + x_n^*(x_0)|^2 \longrightarrow 0 \ \ \ (n \rightarrow \infty).
\end{equation*}

 This implies that $x_n^*(x_0) \longrightarrow 1$ and so $\re x_n^*(x_0) \longrightarrow 1$. Hence $\|x_n^* - x_0^*\| \longrightarrow 0$ when $n \rightarrow \infty$. This shows that $x_0^*$ is a strong peak point for $\A$ with respect to the norm.

Now suppose that $X$ is reflexive and $X^*$ is locally uniformly convex. We will prove that every point $x_0^* \in S_{X^*}$ is a $w^*$-strongly exposed point. Indeed, since $X$ is reflexive, there exists $x_0 \in S_X$ such that $\re x_0^*(x_0) = 1$. Let $\{x_n^*\} \subset B_{X^*}$ be such that $\re x_n^*(x_0) \longrightarrow 1$ when $n \rightarrow \infty$. Then
\begin{equation*}
1 \geq \left\| \frac{x_n^* + x_0^*}{2} \right\| \geq \re \left(  \frac{x_n^*(x_0) + x_0^*(x_0)}{2} \right) \stackrel{n \rightarrow \infty}{\longrightarrow} 1.	
\end{equation*}
Since $X^*$ locally uniformly convex, we get that $\|x_n^* - x_0^*\| \longrightarrow 0$ when $n \rightarrow \infty$. By the previous paragraph, $\Gamma_s = S_{X^*}$ as desired.
\end{proof}

\end{document}